\documentclass[reqno]{article}
\usepackage{amssymb}
\usepackage{graphicx}
\usepackage{amsmath}
\usepackage{amsthm}
\usepackage{verbatim}
\usepackage{enumitem}

\usepackage{mathtools}

\newtheorem{thm}{Theorem}
\newtheorem{lem}[thm]{Lemma}

\newtheorem{cor}[thm]{Corollary}
\newtheorem{fact}[thm]{Fact}

% Definition Styles
\theoremstyle{definition}
\newtheorem{dfn}{Definition}
\newtheorem{rem}{Remark}
\newtheorem{example}{Example}

\mathtoolsset{showonlyrefs}

\newcommand{\re}{{\mathbb R}}
\newcommand{\ce}{{\mathbb C}}
\newcommand{\ze}{{\mathbb Z}}
\newcommand{\te}{{\mathbb T}}
\newcommand{\se}{{\mathbb S}}
\newcommand{\me}{\mathrm{e}}
\newcommand{\mi}{\mathrm{i}}
\newcommand{\md}{\mathrm{d}}

\title{Hydrodynamic Killing vector fields on surfaces}  

\author{Yuuki Shimizu} 
%\address{Graduate School of Mathematical Sciences, University of Tokyo, Komaba 3-8-1 Meguro, Tokyo 153-8914, Japan } 
%\email{yshimizu@g.ecc.u-tokyo.ac.jp} 
%\thanks{The second author was supported in part by NSF Grant \#000000.}

%    General info 
%\subjclass[2010]{53C21,	35Q31, 30F20} 

\date{\today} 
%\dedicatory{This paper is dedicated to our advisors.} 

%\keywords{Killing vector fields, Euler-Arnold equation, Riemann surfaces} 

\begin{document}

\maketitle
\begin{abstract} 
Killing vector fields, which have their origins in Riemannian geometry, have recently garnered attention for their significance in understanding fluid flows on curved surfaces. 
Owing to the significance of behavior of fluid flows around the boundary and at infinity, in the context of fluid dynamics, Killing vector fields of interest should satisfy the slip boundary condition and be complete vector fields, which are called hydrodynamic Killing vector fields (HKVF) in this paper. 
Our purpose is to determine surfaces admitting a HKVF. 
We prove that any connected, orientable surface admitting an HKVF is conformally equivalent to one of the 14 canonical Riemann surfaces, each with either a rotationally or translationally symmetric metric. 
This paves the way for quantitative investigations of fluid flows associated with Killing vector fields and zonal flows, such as issues of stability and instability, extending its applications potentially to global meteorological phenomena and planetary atmospheric science. 
\end{abstract}

\section{Introduction}
Remarkable fluid flows have been observed on various planets, for instance, Saturn's hexagon and Great Red Spots on Jupiter, not to mention global meteorological phenomena and climate change on a global scale, which have garnered attention from a broad spectrum of researchers from astronomers to geometers. 
Among the myriad of mathematical models that have been proposed to clarify the mechanisms of these flows, in recent years, geometric models focusing on the shape of planets have been proposed. 
For understanding localized vortex structures like Saturn's hexagon, point vortex dynamics offer valuable insights. 
This implies that the existence of a rotational vector field on a sphere leads to an equilibrium state in which point vortices move at a constant speed keeping the polygonal configuration, known as vortex ring~\cite{Dritschel_Boatto_2016}. 
Also for the Great Red Spot, the existence of a rotational vector field on an ellipsoid plays an essential role, that is, a stable zonal flow, defined as a scalar multiple of the rotational vector field, preserves the Great Red Spot~\cite{Tauchi_Yoneda2022_ellipsoid}. 
In general, rotational vector fields on surfaces are considered to be special cases of Killing vector fields, and in recent years there has been a rapid development in fluid dynamics involving Killing vector fields, other than these studies, cf.~\cite{Lichtenfelz_Misiolek_Preston2022,Constantin_Drivas_Ginsberg2021,Miura_2022,Samavaki_Tuomela2020}.

Killing vector fields, defined as vector fields that preserve the Riemannian metric, have traditionally been an object of intensive study, especially within the realms of Riemannian geometry, representation theory, and the theory of Riemannian foliations~\cite{Alexandrino_Bettiol_2015,Berestovskii_Nikonorov_2020}.
In a seemingly unrelated context, the Killing vector fields have recently gained attention in the field of fluid dynamics, arguably due to their role as generalizations of central research subjects like axisymmetric flows and uniform flows.
Additionally, Killing vector fields can be characterized in fluid dynamics as stationary solutions to the Euler-Arnold equation, which describes incompressible and inviscid fluid flows. 
While rotational vector fields on surfaces of revolution and translational vector fields on flat tori have their own significance, translational vector fields on helicoids stand out as particularly significant examples of Killing vector fields that appear in fluid flows within a thin film. 
While identifying Killing vector fields on specific surfaces is relatively straightforward, 
when the discussion is extended to general surfaces serving as fluid flow fields, 
the very existence of non-zero Killing vector fields becomes a less straightforward issue. 
In that regard, it is still unclear for which specific shapes or types of surfaces the insights obtained by assuming the existence of non-zero Killing vector fields are applicable. 

Limited exclusively to closed surfaces, the geometry of surfaces admitting a non-zero Killing vector field is well-investigated.
In fact, no non-zero Killing vector fields exist on compact Riemannian manifolds with negative Ricci curvature everywhere~\cite{Bochner_1946}. 
Furthermore, it is a well-known fact that every orientable closed surface admitting a non-zero Killing vector field is homeomorphic to a sphere or a torus.
These facts, which are applicable only to closed surfaces, could certainly offer insights into fluid dynamics. 
On the other hand, taking into account that rivers are modeled as a channel domain and that behavior near the boundary and at infinity play significant roles in fluid flows in the domain, limiting the scope to just closed surfaces may at times fall short in the field of fluid dynamics. 
Moreover, since solutions for the Euler-Arnold equation, of course, depend on the choice of the Riemannian metric, topological classification alone is not sufficient for examining their quantitative behavior.
In summary, grounded in reasonable assumptions from the perspective of fluid dynamics, it is necessary for applications to clarify the geometry of surfaces admitting a non-zero Killing vector field. 
This paves the way for quantitative investigations of flows associated with Killing vector fields and zonal flows, such as issues of stability and instability, from a more comprehensive perspective.

The purpose of this research is to determine the surfaces admitting a Killing vector field, under appropriate assumptions for fluid dynamics. 
First, we explore the types of Killing vector fields that are suitable in the context of fluid dynamics.
To qualify as steady solutions to the Euler-Arnold equation, Killing vector fields of interest are required to meet the slip boundary condition.
In addition, the Killing vector fields should be complete vector fields in order to avoid any fluid particle leaking out of the surface in finite time. 
Thus, Killing vector fields that are non-zero, complete, and satisfy the slip boundary condition are identified as particularly relevant in the context of fluid dynamics, and are called \textit{Hydrodynamic Killing Vector Fields} (HKVF).

Next, we proceed to classify connected, orientable surfaces that admit a given HKVF.
The surface is conformally equivalent to one of the 14 canonical Riemann surfaces. 
Then, the HKVF is represented by a rotational or translational vector field on the Riemann surface with the Riemannian metric represented as a conformal metric that possesses rotational or translational symmetry, respectively. 
In other words, for any orientable surface that admits an HKVF, we can, without loss of generality, assume that the surface corresponds to one of the 14 canonical Riemann surfaces, each equipped with either a rotationally or translationally symmetric metric.
Our proof is grounded in both the geometric properties of Killing vector fields and the classification of M\"obius transformations on exceptional Riemann surfaces. 
A brief overview of these foundational topics is presented in Section~\ref{sec-prel}. 
For a more in-depth exploration, readers are referred to~\cite{Alexandrino_Bettiol_2015,Farkas_Kra_1991}.

The remainder of this paper is organized as follows.
In Section~\ref{sec-prel}, we briefly review the geometry of Riemann surfaces.
In Section~\ref{sec-hkvf}, we discuss which types of Killing vector fields are of interest in the context of fluid dynamics. 
In Section~\ref{sec-main}, we present the main results, followed by proofs in Section~\ref{sec-proof}.

\section{Preliminaries}
\label{sec-prel}
Let $(M,g)$ be a connected orientable Riemannian $2$-manifold possibly with boundary, called a surface in the present paper. 
Every surface is assumed compact or not. 
As a convention in surface theory, a compact surface without boundary is called a closed surface and a non-compact surface without boundary is called an open surface. 
The infinity in a non-compact surface is described as the notion of ends.
An end of a surface is unbounded connected component of the complement of a compact subset in $M$. 

Let $\mathfrak{X}(M)$ be the space of all vector fields $X:M\to TM$. 
Let $\mathrm{Diff}(M)$ be the space of all diffeomorphisms on $M$. 
For each $X\in\mathfrak{X}(M)$ and some $t\in\re$, we denote the time $t$-map of $X$ by $X_t\in\mathrm{Diff}(M)$. 
A vector field $X\in\mathfrak{X}(M)$ is said to be complete if for each $t\in\re$, $X_t\in\mathrm{Diff}(M)$ is defined. 
Let $\mathrm{Fix}(X)=\{p\in M|\, X(p)=0\}$ and $\mathrm{Per}(X)=\{p\in M|\,\exists  T \in (0,\infty),\, X_T(p)=p\}$. 
Let $\mathcal{L}$ denote the Lie derivative. 
The slip boundary condition for a vector field $X$ on the boundary $\partial M$ is that $g(X,n)=0$ on $\partial M$ where $n$ is the inward unit normal vector for $\partial M$, or equivalently, $X|_{\partial M}\in\mathfrak{X}(\partial M)$. 
The Riemannian metric $g$ and all vector fields and all diffeomorphisms are of class $C^r$ with $r\ge1$. 

Owing to the orientability of $M$, we can utilize techniques from complex analysis.
A complex function $f\colon\mathbb{C}\to \mathbb{C}$ is said to be holomorphic if it satisfies the Cauchy-Riemann equation
\begin{align*}
	\bar\partial f=0.
\end{align*}
Moreover, if $f$ is bijective and its inverse is also holomorphic, then $f$ is called a biholomorphism.
A surface is a complex 1-manifold, or a Riemann surface if the surface is equipped with complex charts $\mathcal{S}=\bigcup_\alpha (U_\alpha,\phi_\alpha)$ that satisfy $M=\bigcup_\alpha U_\alpha$ and for any $(U_\alpha,\phi_\alpha)$, $(U_\beta,\phi_\beta)\in\mathcal{S}$ with $U_\alpha\cap U_\beta\ne \emptyset$, $\phi_\alpha\circ \phi_\beta^{-1} :\phi_\beta( U_\alpha \cap U_\beta) \to \phi_\alpha(U_\alpha\cap U_{\beta}) \subset \mathbb{C} $ is a biholomorphism. 
Two Riemann surfaces $M$ and $N$ are identified if there exists a diffeomorphism $f:M\to N$ such that $f$ is a biholomorphism in each complex chart. 

The surface also has a conformal structure. 
A conformal structure on $M$ is an equivalence class of Riemannian metrics on $M$ under the relation of conformal equivalence.
Two metrics $g$ and $h$ are conformally equivalent if there exists orientation preserving $\varphi\in \mathrm{Diff}(M)$ and a positive smooth function $\lambda$ on $M$ such that 
\begin{align}
	\varphi^*h =\lambda^2 g. 
\end{align}
Every surface $(M,g)$ is locally conformal to the plane, that is, for each $p\in M$, there exists a chart $(U,(x,y))$, called an isothermal chart, centered at $p$ such that the metric has the following local representation in the chart:
\begin{align*}
	g= \lambda^2 (\md x^2+\md y^2).
\end{align*}
Using conventional identification $z=x+\mi y$ for a given isothermal chart $(U,(x,y))$, we obtain a complex chart $(U,z)$.
In particular, $g$ is also written as 
\begin{align}
	g=\lambda^2|\md z|^2
\end{align} 
since 
\begin{align}
	|\md z|^2=\md z\md\bar z=
	(\md x+\mi y)(\md x-\mi y)=\md x^2+\md y^2.
\end{align}
In the present paper, every surface is equipped with a complex structure and a conformal structure. 
We write $M\simeq N$ if $M$ is conformally equivalent to $N$. 
\begin{example}
\label{ex-surf}
	We list some canonical Riemann surfaces of concern in the present paper. 
	Let $\se^1=\re/2\pi\ze$. 
	\begin{enumerate}
		\item The Riemann sphere $\hat\ce=\ce\cup\{\infty\}$.
		\item The complex plane $\ce$.
		\item The unit open disc $\Delta=\{z\in\ce |\, |z|<1\}$, or equivalently, the right half plane $(0,\infty)\times \re$ or the open channel $(0,2\pi)\times \re$.
		\item The punctured plane $\ce^*=\ce\setminus\{0\}$, or equivalently, the cylinder $\se^1\times\re$.
		\item The punctured open disc $\Delta^*=\Delta\setminus\{0\}$. 
		\item An open annulus $\Delta_\rho=\{z\in\ce |\, \rho<|z|<1\}$, $\rho \in(0,1)$. 
		\item A torus $\te_\Gamma=\ce/\Gamma$, $\Gamma=\{m\pi_1+n\pi_2|\,\pi_1,\pi_2\in\ce,\,\mathrm{Im}(\pi_1/\pi_2)>0,\,m,n\in\ze \}$.%, or equivalently, $\te_\rho=\ce^*/(\rho^{\ze})$ ($\rho\in\{z\in\ce^*|\,|z|\ne1\}$)
		\item The unit closed disc $\overline\Delta=\{z\in\ce|\, |z|\le1\}$ 
		\item The punctured closed disc $\overline\Delta^*=\overline\Delta\setminus\{0\}$. 
		\item A closed annulus $\overline\Delta_\rho=\{z\in\ce|\, \rho\le |z|\le 1\}$. 
		\item A semi-closed annulus $\tilde \Delta_\rho=\{z\in\ce|\, \rho< |z|\le 1\}$.
		\item The closed right half plane $[0,\infty)\times\re$. 
		\item The semi-closed channel $[0,2\pi)\times \re$. 
		\item The closed channel $[0,2\pi]\times \re$.
	\end{enumerate}
	Of theses surface, (1)-(7) are without boundary and (8)-(14) are with boundary. 
	The compact surface in the above list is $\hat\ce$, $\te_\Gamma$, $\overline\Delta$ or $\overline \Delta_\rho$ and the closed surface is the sphere or torus. 
	The open surface is $\ce$, $\Delta$, $\ce^*$, $\Delta^*$ or $\Delta_\rho$.
	
	For (3), (4) and (7), there are several Riemann surfaces that are identified with each other by conformal mappings. 
	We have the conformal mappings $F_p:\Delta\to (0,\infty)\times\re$, $F_h:\Delta\to (0,2\pi)\times \re$, $F_l:\ce^*\to \se^1\times\re$ and their inverse such that 
	\begin{align}
		F_p(z)&= \frac{-z+1}{z+1},
		\quad F_p^{-1}(w)=\frac{w-1}{-w-1},\\
		F_h(z)&=-2\mi \log\left(\mi\frac{1-z}{1+z}\right),
		\quad F_h^{-1}(w)= \frac{1+\mi\me^{\mi w/2}}{1-\mi\me^{\mi w/2}}\\
		F_l(z)&=-\mi \log(z),
		\quad F_l^{-1}(w)=\exp(\mi w).
	\end{align} 
\end{example}

The set of all conformal mappings on $M$ to itself forms a group, called automorphism group $\mathrm{Aut}(M)$. 
If $M$ has no boundary, $\mathrm{Aut}(M)$ implies the conformal structure on $M$ as follows. 
\begin{fact}[{\cite[Theorem in V.4.1]{Farkas_Kra_1991}}]
\label{fact-except-Riem-surf}
	Let $M$ be a surface without boundary.
	Then, $\mathrm{Aut}(M)$ is not a discrete group if and only if $M$ is conformally equivalent to one of the following Riemann surfaces: 
	$\hat\ce$, 
	$\ce$, 
	$\Delta$, 
	$\ce^*$, 
	$\Delta^*$, 
	$\Delta_\rho$, 
	$\te_\Gamma$.
\end{fact}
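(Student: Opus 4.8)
The plan is to combine the uniformization theorem with the identification of $\mathrm{Aut}(M)$ as a quotient of the normalizer of the deck group. By uniformization, the universal cover $\widetilde M$ is biholomorphic to $\hat\ce$, $\ce$, or $\Delta$, and $M$ is biholomorphic to $\widetilde M/\Gamma$ where $\Gamma\le\mathrm{Aut}(\widetilde M)$ is a discrete group acting freely. Every $f\in\mathrm{Aut}(M)$ lifts to a biholomorphism $\widetilde f$ of $\widetilde M$ that normalizes $\Gamma$, and every element of the normalizer $N(\Gamma)$ descends to $M$; this yields a group isomorphism $\mathrm{Aut}(M)\cong N(\Gamma)/\Gamma$. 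Since $\Gamma$ is discrete it is closed, hence so is $N(\Gamma)$, and therefore $N(\Gamma)$ is an embedded Lie subgroup of $\mathrm{Aut}(\widetilde M)$. Consequently $\mathrm{Aut}(M)$ is non-discrete if and only if $N(\Gamma)$ has positive dimension, i.e.\ if and only if some positive-dimensional connected subgroup $H\le\mathrm{Aut}(\widetilde M)$ normalizes $\Gamma$; and since $H$ is connected and $\Gamma$ discrete, the map $h\mapsto h\gamma h^{-1}$ is constant for each $\gamma\in\Gamma$, so in fact $H$ centralizes $\Gamma$. The converse direction is handled at the end by exhibiting explicit one-parameter families of automorphisms.

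I would then run through the three possibilities for $\widetilde M$. If $\widetilde M=\hat\ce$, then $\Gamma$ is trivial because a nontrivial finite-order M\"obius transformation of $\hat\ce$ has a fixed point, so $M\simeq\hat\ce$. If $\widetilde M=\ce$, a fixed-point-free automorphism of $\ce$ is a translation, so $\Gamma$ is a discrete group of translations of rank $0$, $1$, or $2$, giving $M\simeq\ce$, $\ce^*$, or $\te_\Gamma$; in all three cases the translation subgroup of $\mathrm{Aut}(\ce)$ centralizes $\Gamma$, so $N(\Gamma)$ is automatically non-discrete, consistent with the claim.

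The hyperbolic case $\widetilde M=\Delta$ is the crux, and here I expect the main difficulty. Using $\mathrm{Aut}(\Delta)\cong\mathrm{PSU}(1,1)\cong\mathrm{PSL}(2,\re)$, I would list the connected Lie subgroups of positive dimension up to conjugacy: the whole group (dimension $3$), the Borel subgroup (dimension $2$), and the one-parameter subgroups of elliptic, parabolic, and hyperbolic type. A direct computation of centralizers then shows: for the whole group and the Borel the centralizer is trivial, forcing $\Gamma$ trivial and $M\simeq\Delta$; for an elliptic one-parameter subgroup the centralizer is the full group of rotations about a point $p_0\in\Delta$, every nontrivial discrete subgroup of which fixes $p_0$, contradicting freeness, so again $\Gamma$ is trivial; for a parabolic one-parameter subgroup the centralizer is a parabolic one-parameter subgroup, so $\Gamma\cong\ze$ is generated by a parabolic and $M\simeq\Delta^*$; and for a hyperbolic one-parameter subgroup the centralizer is a hyperbolic one-parameter subgroup, so $\Gamma\cong\ze$ is generated by a hyperbolic and $M\simeq\Delta_\rho$ for a suitable $\rho\in(0,1)$. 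Finally, for the converse, note that $\hat\ce$, $\Delta$, $\Delta^*$, and $\Delta_\rho$ all carry the rotations $z\mapsto\me^{\mi\theta}z$, that $\ce$ and $\ce^*$ carry the scalings $z\mapsto az$, and that $\te_\Gamma$ carries the translations induced from $\ce$, so each of the seven surfaces has a one-parameter family of automorphisms and hence non-discrete automorphism group. The delicate points are the classification of connected subgroups of $\mathrm{PSL}(2,\re)$ together with their centralizers, and the use of freeness of the $\Gamma$-action to rule out the elliptic case.
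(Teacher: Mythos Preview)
The paper does not give its own proof of this statement; it is quoted as a known result from Farkas--Kra \cite[V.4.1]{Farkas_Kra_1991} and used as a black box in the proof of Theorem~\ref{thm:main}. Your sketch is correct and is essentially the standard argument found in that reference: uniformization, the identification $\mathrm{Aut}(M)\cong N(\Gamma)/\Gamma$, the observation that the identity component of $N(\Gamma)$ must centralize the discrete group $\Gamma$, and then a case-by-case determination of which discrete, freely acting $\Gamma\le\mathrm{Aut}(\widetilde M)$ have positive-dimensional centralizer. There is nothing further to compare.
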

Every conformal mapping is identified with a biholomorphic mapping, that is a bijective and holomorphic function on $M$ to itself. 
In particular, in the situation stated in Theorem~\ref{fact-except-Riem-surf}, every conformal mapping $f$ is represented as a M\"obius transformation $f(z)=(az+b)/(cz+d)$ for some $a,b,c,d\in\ce$ with $ad-bc=1$. 

\section{Hydrodynamic Killing vector field}
\label{sec-hkvf}
Let us consider what kind of a Killing vector field is reasonable in terms of fluid dynamics on surfaces. 
$X\in\mathfrak{X}(M)$ is said to be Killing if $\mathcal{L}_X g=0$. 
Since
\begin{align}
	\mathcal{L}_X g=\left.\frac{\md }{\md t}X_t^*g\right|_{t=0}=0,
\end{align}
the Killing vector field generates a flow that preserves the Riemannian metric. 
Obviously, if $X\equiv0$, $X$ is a Killing vector field. 
Thus, it should be $X\not \equiv0$ to eliminate the trivial case. 
For instance, in the Euclidean plane, a constant vector field and rotational vector field with constant angular velocity is a Killing vector field. 

The motion of incompressible and inviscid fluids is governed by the Euler-Arnold equation on a surface $(M,g)$:
\begin{align}
	v_t+\nabla_v v=-\mathop{\mathrm{grad}} p,\quad \mathop{\mathrm{div}} v=0
\end{align}
The fluid velocity $v\in\mathfrak{X}(M)$ and the pressure $p\in C(M)$ are unknown at time $t\in (0,T)$. 
If $M$ has boundary, any fluid velocity satisfies the slip boundary condition. 
For each Killing vector field $X\in \mathfrak{X}(M)$, owing to 
\begin{align}
	\nabla_X X=-\mathop{\mathrm{grad}}\mathcal{L}(|X|^2/2),
\end{align}
the Killing vector field $X$ is a steady solution for the Euler-Arnold equation with the pressure given as $p=|X|^2/2$ up to constant as long as it meets the slip boundary condition. 

\begin{figure}[htbp]
  \centering
  \begin{minipage}[b]{.33\hsize}
    \includegraphics[width=\hsize]{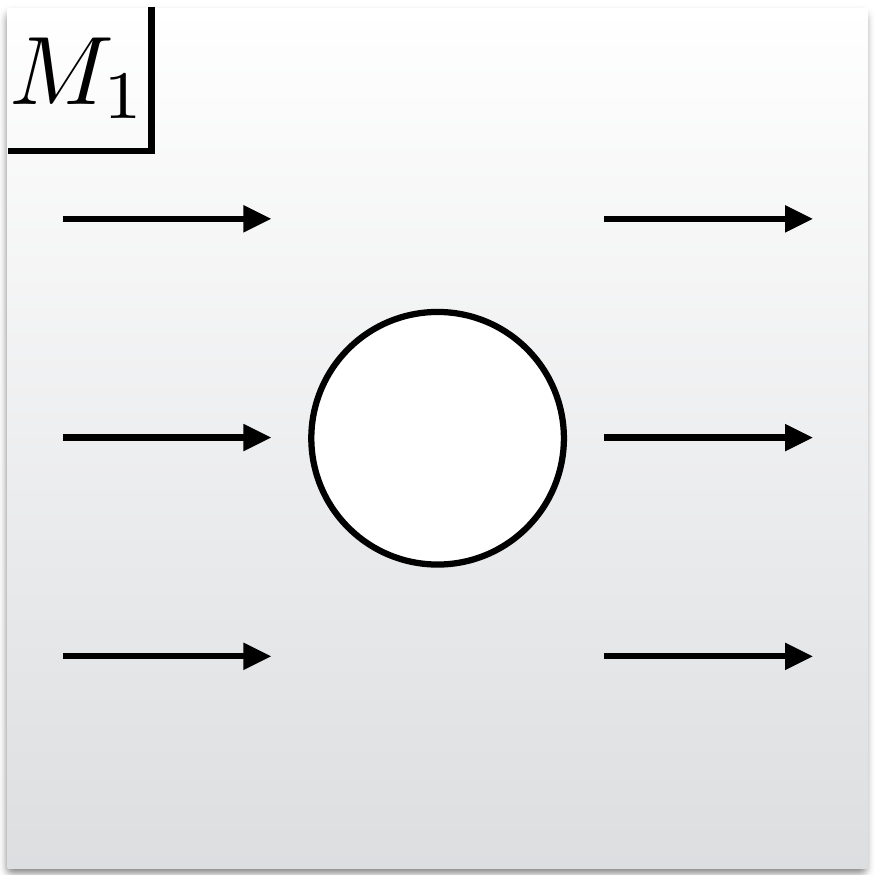}
  \end{minipage}%
  \begin{minipage}[b]{.33\hsize}
    \includegraphics[width=\hsize]{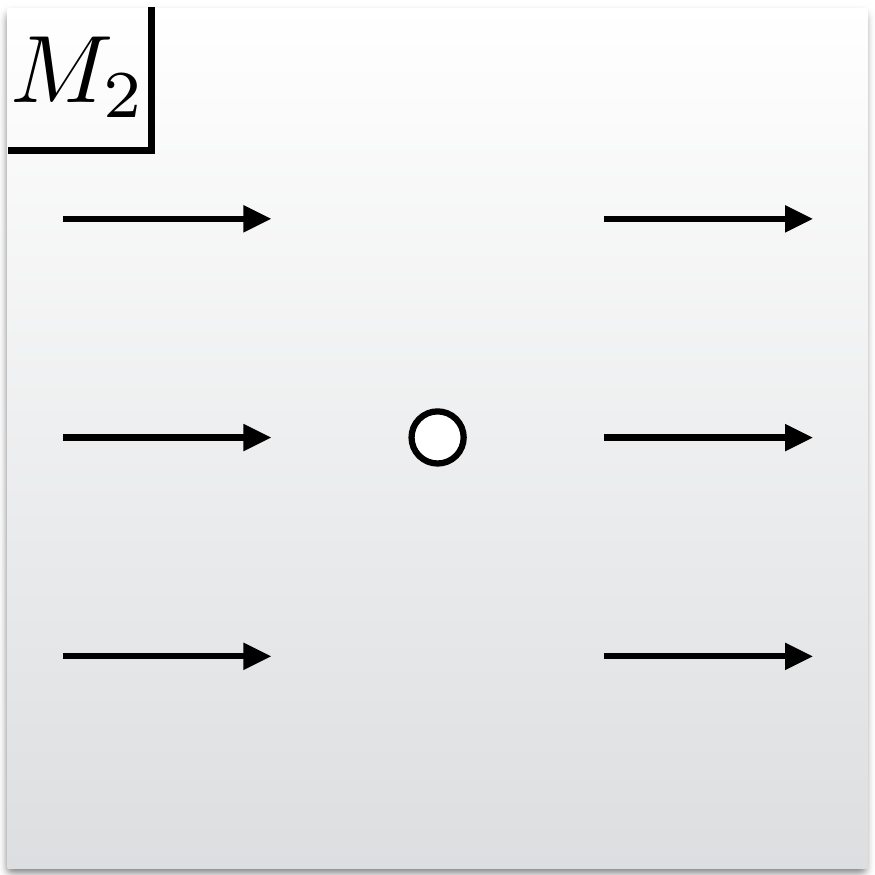}
  \end{minipage}%
  \begin{minipage}[b]{.33\hsize}
    \includegraphics[width=\hsize]{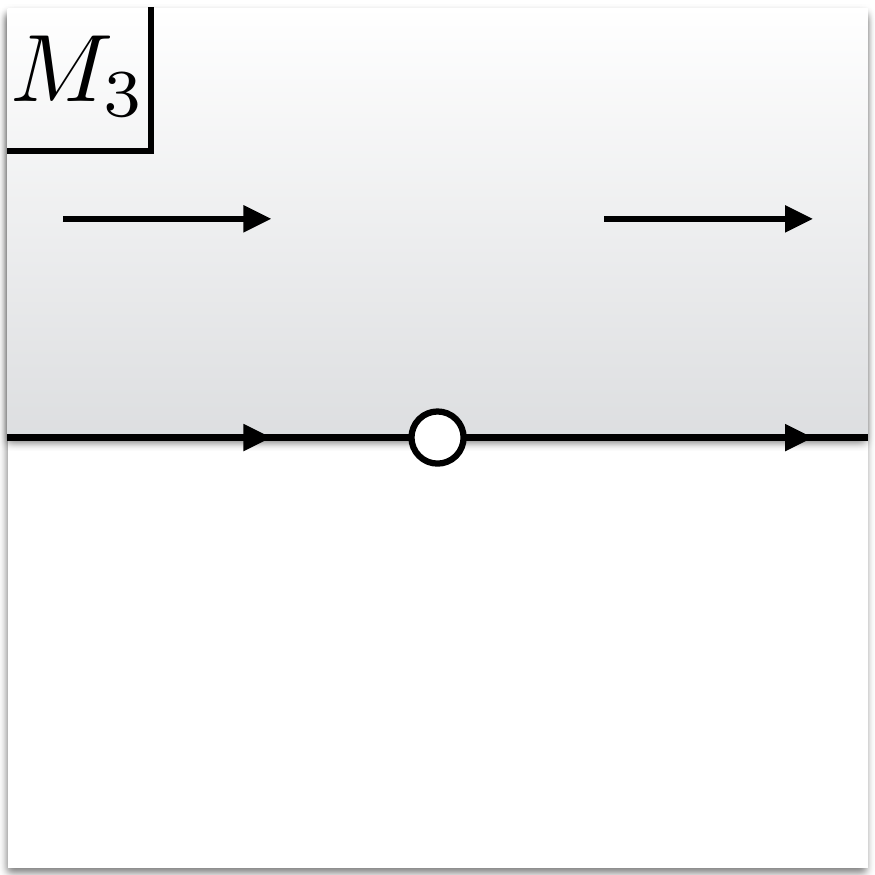}
  \end{minipage}%
  \caption{Examples of non-complete Killing vector fields}
  \label{fig:nc}
\end{figure}

Fig~\ref{fig:nc} describes examples of Killing vector fields on $M_1=\{z\in\ce| \, |z|\ge1\}$, $M_2=\{z\in\ce|\,|z|>0\}$ and $M_3=\{z\in\ce|\, \mathrm{Im} z\geq 0, \,|z|>0\}$ equipped with the Euclidean metric, respectively. 
Of these surfaces, the constant vector field $X=\partial_x$ becomes a Killing vector field. 
In the first example, $X$ does not satisfy the slip boundary condition. 
Concerning the second and third examples, albeit $M_2$ have no boundary, the orbit $\{X_t(p)\}_{t}$ starting at $p=-1$ can not be defined at $t=1$. 
These examples indicates that in order fluid to leak out of these surface, $X$ must satisfy the slip boundary condition and be a complete vector field. 
In addition, the third example implies that the restricted vector field $X|_{\partial M}$ to the boundary also should be complete. 
\begin{dfn}[Hydrodynamic Killing vector field]
	Let $(M,g)$ be a surface. 
	$X\in\mathfrak{X}(M)$ is called \textit{a hydrodynamic Killing vector field} (HKVF for short) if the following conditions are satisfied.
	\begin{enumerate}
		\item $\mathcal{L}_X g=0$.
		\item $X\not\equiv 0$. 
		\item $X|_{\partial M} \in\mathfrak{X}(\partial M)$.
		\item For any $t\in\re$, $X_t\in\mathrm{Diff}(M)$ and $(X|_{\partial M})_t \in\mathrm{Diff}(\partial M)$. 
	\end{enumerate}
\end{dfn}
Any non-zero Killing vector field on a closed surface becomes a HKVF since every vector field on compact manifolds is a complete vector field. 
Clearly, the notion of HKVF is valid even if $\dim M\geq3$. 
On the other hand, taking advantage of $\dim M=2$ and complex analysis, we will classify surfaces with a HKVF. 
It is left to be seen whether a similar classification holds for higher dimensional manifolds with a HKVF. 
\begin{example}
\label{ex-surf}
	Let $R$ be one of the Riemann surfaces listed in Example~\ref{ex-surf} equipped with a flat metric $g$, which gives constant curvature. 
	We define two vector fields $V^{\mathrm{rot}},V^{\mathrm{tra}}\in\mathfrak{X}(R)$ 
	\begin{align}
		V^{\mathrm{rot}}=\partial_\theta,\quad V^{\mathrm{tra}}=\partial_y,
	\end{align} 
	where $z=r\me^{\mi\theta}=x+\mi y\in R$. 
	The time-$t$ map of $V_t^{\mathrm{rot}}$ and $V_t^{\mathrm{rot}}$ are written by
	\begin{align}
		V_t^{\mathrm{rot}}(z)=\me^{\mi t}z,\quad V_t^{\mathrm{tra}}(z)= z+\mi t,
	\end{align}
	which means $V^{\mathrm{rot}}$ and $V^{\mathrm{rot}}$ generate rotation and translation, respectively. 
	Then, $V^{\mathrm{rot}}$ is a HKVF on $R$ if $R= \hat\ce$, $\ce$, $\Delta$, $\ce^*$, $\Delta^*$, $\Delta_\rho$, $\overline\Delta$, $\overline\Delta^*$, $\overline\Delta_\rho$, $\tilde\Delta_\rho$. 
	$V^{\mathrm{tra}}$ is a HKVF on $R$ if $R= \ce$, $(0,\infty)\times \re$, $(0,2\pi)\times \re$, $\se^1\times \re$, $\te_\Gamma$, $[0,\infty)\times\re$, $[0,2\pi)\times \re$, $[0,2\pi]\times \re$. 
	We will show that for every surface, every HKVF on the surface is reduced to one of the above cases via a conformal mapping. 
\end{example}

\section{Main result}
\label{sec-main}
We show that for any surface $(M,g)$, if there exists a hydrodynamic Killing vector field $X\in\mathfrak{X}(M)$, then $M$ is conformally equivalent to one of the canonical Riemann surfaces $R$ listed in Example~\ref{ex-surf} and $X$ is represented as a rotational or translational vector field on $R$. 

\begin{thm}
\label{thm:main}
	Let $(M,g)$ be a surface. 
	Let $X$ be a hydrodynamic Killing vector field on $M$. 
	Then, there exists a Riemann surface $R$ and a conformal mapping $\phi:M\to R$ with the conformal factor $\lambda:R\to (0,\infty)$ 
	such that 
	\begin{enumerate}
		\item if $\mathrm{Per}(X)\ne \emptyset$, $R$ is the one of the following Riemann surfaces: 
		$\hat\ce$, 
		$\ce$, 
		$\Delta$, 
		$\ce^*$, 
		$\Delta^*$, 
		$\Delta_\rho$, 
		$\te_\Gamma$, 
		$\overline\Delta$, 
		$\overline\Delta^*$, 
		$\overline\Delta_\rho$, 
		$\tilde\Delta_\rho$ and
		\begin{align}
			(\phi_*X)_t(z)
			&=
			\begin{cases}
				z+\mi t,&\quad \text{if }R=\te_\Gamma,\\
				\me^{\mi t}z,&\quad\text{otherwise},
			\end{cases}\\
			(\phi^{-1})^*g
			&=
			\begin{cases}
				\lambda^2(\mathrm{Re}(z))|\md z|^2,&\quad \text{if }R=\te_\Gamma,\\
				\lambda^2(|z|)|\md z|^2,&\quad\text{otherwise}.
			\end{cases}
		\end{align}
		\item otherwise, $R$ is the one of the following Riemann surfaces: 
		$\ce$, $(0,\infty)\times \re$, $(0,2\pi)\times \re$, $\se^1\times \re$, $\te_\Gamma$, $[0,\infty)\times\re$, $[0,2\pi)\times \re$, $[0,2\pi]\times \re$
		and 
		\begin{align}
			(\phi_*X)_t(z)
			&=z+\mi t,\\
			(\phi^{-1})^*g
			&=\lambda^2(\mathrm{Re}(z) )|\md z|^2.
		\end{align}
	\end{enumerate}
\end{thm}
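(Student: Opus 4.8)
The strategy is to first use the Killing condition to reduce to a complete conformal Killing field on a fixed Riemann surface, then invoke the classification of automorphism groups (Fact~\ref{fact-except-Riem-surf}) to identify the surface, and finally read off the normal form of $X$ and $g$ from the one-parameter group it generates.

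First I would observe that, since $M$ is orientable and equipped with its conformal structure, the flow $X_t$ of the Killing field consists of isometries, hence in particular of conformal automorphisms; thus $\{X_t\}_{t\in\re}$ is a one-parameter subgroup of $\mathrm{Aut}(M)$. Because $X\not\equiv0$, this subgroup is nontrivial, and completeness (condition~4 of HKVF) guarantees it is a genuine $\re$- or $\se^1$-action. The first case split is exactly whether $\mathrm{Per}(X)=\emptyset$ or not: if some orbit is periodic, then (by the Killing property, orbits are geodesic circles of fixed length, so) \emph{every} non-fixed orbit is periodic with the same period and the action factors through $\se^1$; otherwise the action is by $\re$ and is free away from $\mathrm{Fix}(X)$. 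Either way, $\mathrm{Aut}(M)$ contains a nondiscrete subgroup, so it is nondiscrete; when $\partial M=\emptyset$, Fact~\ref{fact-except-Riem-surf} immediately forces $M\simeq$ one of $\hat\ce,\ce,\Delta,\ce^*,\Delta^*,\Delta_\rho,\te_\Gamma$. For surfaces with boundary I would pass to the conformal double $\hat M$ (or equivalently reflect across $\partial M$), note that $X$ extends to an HKVF on $\hat M$ by the slip condition (condition~3), apply the boundaryless case to $\hat M$, and then descend: the only doubles arising are $\hat\ce$ (giving $\overline\Delta$), $\ce^*$ (giving $\overline\Delta^*$, $[0,\infty)\times\re$, $[0,2\pi]\times\re$), $\Delta$ (giving $\overline\Delta_\rho$ and the semi-closed variants), etc., which yields precisely the ten/eight surfaces listed.

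Next, having fixed the conformal model $R$, I would use that every conformal automorphism of an exceptional surface is a Möbius transformation (stated after Fact~\ref{fact-except-Riem-surf}), so the one-parameter group $\phi\circ X_t\circ\phi^{-1}$ is a one-parameter subgroup of Möbius transformations. On $\hat\ce$ these are classified up to conjugacy as elliptic, parabolic, hyperbolic, or loxodromic; the \emph{periodic} one-parameter groups are exactly the elliptic ones, conjugate to $z\mapsto\me^{\mi t}z$, while on the torus the only one-parameter groups of automorphisms are the translations $z\mapsto z+\mi t$ (after rotating coordinates). Working case by case through the list, a periodic HKVF is conjugate to the rotation $z\mapsto\me^{\mi t}z$ on all models except $\te_\Gamma$, and a non-periodic one is conjugate to a translation $z\mapsto z+\mi t$ (on $\ce$, on the punctured plane/half-plane in the form $w\mapsto w+\mi t$ after applying the maps $F_p,F_h,F_l$ of Example~\ref{ex-surf}, and on the torus); the fixed-point set and the topology of $M$ pin down which model actually occurs. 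Finally, the metric normal form: writing $g=\lambda^2|\md z|^2$ on $R$, the Killing equation $\mathcal L_X g=0$ with $X=\partial_\theta$ (resp.\ $X=\partial_y$) says that $\lambda$ is invariant under $\theta\mapsto\theta+t$ (resp.\ $y\mapsto y+t$), i.e.\ $\lambda=\lambda(|z|)$ (resp.\ $\lambda=\lambda(\mathrm{Re}\,z)$), which is the claimed form.

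The main obstacle I expect is the bookkeeping for surfaces with boundary: one must check that the slip condition really does allow $X$ to extend smoothly across $\partial M$ to the double, that the extended field is still complete (this uses condition~4 including $(X|_{\partial M})_t\in\mathrm{Diff}(\partial M)$, which is exactly what rules out the leaking examples $M_2,M_3$ of Figure~\ref{fig:nc}), and that each boundaryless model produces exactly the bordered models in the statement and no others — in particular distinguishing $\overline\Delta^*$ from the closed half-plane, and the semi-closed annulus $\tilde\Delta_\rho$ and channel $[0,2\pi)\times\re$, which arise when the boundary circle is a single orbit of a periodic flow but the "other end'' is a puncture rather than a second boundary circle. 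A secondary technical point is handling the case $\mathrm{Fix}(X)\neq\emptyset$: an isolated zero of a Killing field on a surface forces the local model to be a rotation, which both rules out $\ce^*$, $\Delta^*$, tori, etc.\ in the periodic case when there is a fixed point and feeds back into the identification of $R$.
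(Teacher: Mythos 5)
Your overall skeleton coincides with the paper's: the flow of a Killing field is a one-parameter group of isometries, hence of conformal automorphisms, so $\mathrm{Aut}(M)$ is nondiscrete and Fact~\ref{fact-except-Riem-surf} pins down the boundaryless models; one then classifies the induced one-parameter groups of M\"obius transformations case by case and reads off the invariance $\lambda=\lambda(|z|)$ or $\lambda=\lambda(\mathrm{Re}\,z)$ exactly as you describe. The genuine divergence is your treatment of $\partial M\neq\emptyset$. You pass to the conformal double and assert that the slip condition lets $X$ extend to an HKVF there. This step has a gap: the metric obtained by reflecting $g$ across $\partial M$ is smooth only when $\partial M$ is totally geodesic, and a boundary orbit of a Killing field is a curve of constant geodesic curvature, not in general a geodesic (think of the boundary circle of $\overline\Delta\subset\hat\ce$ with the round metric). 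In flow-box coordinates $g=\lambda^2(x)(\md x^2+\md y^2)$ with $\partial M=\{x=0\}$, the reflected factor $\lambda(|x|)$ fails to be $C^1$ unless $\lambda'(0)=0$. So the doubled object is a Riemann surface with a conformal $\re$-action (via Schwarz reflection of each $X_t$), which is enough to identify the conformal type, but it is not an HKVF, and any part of your argument that uses the metric on the double (e.g.\ excluding attracting dynamics) must instead be run on $\mathrm{int}\,M$. The paper avoids this entirely with Lemma~\ref{lem:extend}, which builds a smooth \emph{one-sided} collar extension of $(M,g,X)$ in coordinates adapted to the flow, so that the boundary components become interior orbits of a genuine HKVF on an open surface; you would need either that lemma or the repair just sketched.

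Two smaller points. First, your justification that a single periodic orbit forces the whole flow to be periodic is wrong as stated: orbits of Killing fields are not geodesic circles. The correct argument is rigidity of isometries ($X_T$ fixes a point and a nonzero tangent vector there, hence is the identity). Second, note that with the paper's definition $\mathrm{Fix}(X)\subset\mathrm{Per}(X)$, so on $\hat\ce$ the hypothesis $\mathrm{Per}(X)\neq\emptyset$ does not by itself exclude loxodromic or hyperbolic one-parameter groups (which have fixed points but are not rotations); you must use the isometry property quantitatively, e.g.\ the paper's observation that an attracting fixed point would shrink areas, contradicting $X_t^*g=g$. Your phrase ``the fixed-point set and the topology pin down which model occurs'' glosses over exactly this exclusion.
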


\begin{rem}
	When $M\simeq\te_\Gamma$ and when $X$ is a HKVF without periodic point, then $M$ must be a flat torus itself.
	Because, $X$ now conjugates the irrotational rotation on $\te_\Gamma$ and, in consequence, every orbit $\{X_t(p)\}_{t\in\re}$ starting at $p\in M$ is dense in $M$. 
	Since $X_t$ is an isometry, the curvature $K$ is invariant on the orbit of $X$. 
	It follows from the continuity of $K$ that $K$ is constant. 
\end{rem}

By restricting $M$ to $M\setminus\mathrm{Fix}(X)$, 
the following local coordinate (or local trivialization) is available. 
\begin{cor}
\label{cor-E}
	Let $(M,g)$ be a surface with a hydrodynamic Killing vector field $X\in\mathfrak{X}(M)$. 
	Then, there exists a conformal mapping $\phi:M\setminus\mathrm{Fix}(X)\to E$ such that 
	\begin{enumerate}
		\item $E$ is either $B\times \se^1$, $B\times \re$ or $\te_\Gamma$ for some $1$-manifold $B$.
		\item $g$ is represented as $\lambda^2(x^1)((\md x^1)^2+(\md x^2)^2)$ on $E$.
		\item $X$ is represented as $\partial_2$ on $E$. 
	\end{enumerate} 
	In particular, $\lambda=|X|\circ\phi^{-1}$. 
\end{cor}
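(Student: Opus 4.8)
The plan is to deduce Corollary~\ref{cor-E} directly from Theorem~\ref{thm:main}, by deleting the fixed-point set of $X$ and, whenever the canonical representative furnished by the theorem is rotational, passing to logarithmic coordinates.

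First I would invoke Theorem~\ref{thm:main} to obtain a Riemann surface $R$, a conformal mapping $\psi\colon M\to R$, and a conformal factor so that $\psi_*X$ is either the translational field $z\mapsto z+\mi t$ on one of $\ce$, $(0,\infty)\times\re$, $(0,2\pi)\times\re$, $\se^1\times\re$, $\te_\Gamma$, $[0,\infty)\times\re$, $[0,2\pi)\times\re$, $[0,2\pi]\times\re$, or the rotational field $z\mapsto\me^{\mi t}z$ on one of $\hat\ce$, $\ce$, $\Delta$, $\ce^*$, $\Delta^*$, $\Delta_\rho$, $\overline\Delta$, $\overline\Delta^*$, $\overline\Delta_\rho$, $\tilde\Delta_\rho$. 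Since $\psi$ is a diffeomorphism conjugating the flow of $X$ to that of $\psi_*X$, it carries $\mathrm{Fix}(X)$ bijectively onto the common fixed-point set of $\{(\psi_*X)_t\}_{t\in\re}$: this set is empty in the translational case, and equals $\{0,\infty\}\cap R$ in the rotational case, i.e. $\{0,\infty\}$ on $\hat\ce$, $\{0\}$ on $\ce$, $\Delta$, $\overline\Delta$, and $\emptyset$ on the six remaining annulus-type surfaces.

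In the translational case nothing needs to be removed, and $R$ is already of the form $B\times\re$ (with $B=\re$, $(0,\infty)$, $(0,2\pi)$, $\se^1$, $[0,\infty)$, $[0,2\pi)$ or $[0,2\pi]$) or $\te_\Gamma$; taking $x^1=\mathrm{Re}(z)$, $x^2=\mathrm{Im}(z)$ and $\phi=\psi$ yields all three assertions from the form of $(\psi^{-1})^*g$ in Theorem~\ref{thm:main}. In the rotational case I would delete the at most two fixed points; in every instance $R\setminus\mathrm{Fix}(\psi_*X)$ is an annular region $\{\rho_1<|z|<\rho_2\}$, possibly with one or both bounding circles attached (for example $\hat\ce\setminus\{0,\infty\}=\ce^*$ and $\overline\Delta\setminus\{0\}=\overline\Delta^*$). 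One then composes with the inverse of the biholomorphism $w\mapsto\me^w$ from $\ce/(2\pi\mi\ze)=\re\times\se^1$ onto $\ce^*$: because the target here is the cylinder quotient and not a branch of a multivalued logarithm, no monodromy obstruction arises. This carries the annular region onto $B\times\se^1$ for some $1$-manifold $B\subseteq\re$, conjugates $z\mapsto\me^{\mi t}z$ to $w\mapsto w+\mi t$, i.e. to $\partial_{x^2}$ with $x^2\in\se^1$, and converts $\lambda^2(|z|)|\md z|^2$ into $\tilde\lambda^2(x^1)((\md x^1)^2+(\md x^2)^2)$ with $\tilde\lambda(x^1)=\lambda(\me^{x^1})\me^{x^1}$, a function of $x^1$ alone; the composition with $\psi$ is the required $\phi$.

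The identity $\lambda=|X|\circ\phi^{-1}$ then holds automatically: on $E$ one has $X=\partial_2$ and $g=\lambda^2(x^1)((\md x^1)^2+(\md x^2)^2)$, hence $|X|^2=g(\partial_2,\partial_2)=\lambda^2(x^1)$, and $\phi$ is an isometry of $(M\setminus\mathrm{Fix}(X),g)$ onto $E$ with this metric. The only place where some care is needed is the torus: there $\mathrm{Fix}(X)=\emptyset$, so $E=\te_\Gamma$ is inherited directly from Theorem~\ref{thm:main}, and the metric has the asserted form $\lambda^2(\mathrm{Re}(z))|\md z|^2$ supplied by that theorem — with $\lambda$ constant when $\mathrm{Per}(X)=\emptyset$ by the Remark following Theorem~\ref{thm:main}, which is what makes $\mathrm{Re}(z)$ a legitimate argument even though it is not globally defined on $\te_\Gamma$. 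Apart from the bookkeeping of which fixed points to delete in each of the finitely many cases, and this monodromy-free choice of logarithmic coordinates, I expect no substantial obstacle.
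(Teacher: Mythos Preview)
Your proposal is correct and follows essentially the same route as the paper's proof: both invoke Theorem~\ref{thm:main}, leave the translational and torus cases untouched, and in the rotational cases pass to logarithmic coordinates on $R\setminus\{0,\infty\}$, obtaining the same conformal factor $\tilde\lambda(x^1)=\lambda(\me^{x^1})\me^{x^1}$ and the same computation of $\lambda=|X|\circ\phi^{-1}$ from $g(\partial_2,\partial_2)$. Your explicit remark that the logarithm is to be taken with values in the cylinder $\re\times\se^1$ (so that no monodromy issue arises) and your handling of the torus when $\mathrm{Per}(X)=\emptyset$ are minor clarifications beyond what the paper spells out, but the argument is the same.
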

In other words, for any surface $(M,g)$ and any HKVF $X\in\mathfrak{X}(M)$, taking $\phi:M\setminus\mathrm{Fix}(X)\to E$, without loss of generality, we can assume that on $E$, $g$ and $X$ are given as 
\begin{align}
	g&=\lambda^2(x)(\md x^2+\md y^2),\\
	X&= \frac{\partial}{\partial y}.
\end{align}
for some smooth function $\lambda:E\to (0,\infty)$. 

\section{Proof}
\label{sec-proof}
Theorem~\ref{fact-except-Riem-surf} is helpful for proving the main theorem but it is applicable only for surfaces without boundary. 
It is non-trivial that a conformal mappping can be smoothly extended over the boundary as is a homeomorphism. 
On the other hand, the existence of a HKVF yields that a surface with boundary becomes a smooth submanifold of a surface without boundary.  
%In addition, every fixed point of a HKVF on a surface is an isolated center.
Using these properties, we prove the main theorem. 
\begin{lem}
\label{lem:extend}
	Let $(M,g)$ be a surface with boundary. 
	Let $X$ be a HKVF on $M$. 
	Then, there exists an open surface $(\tilde M,\tilde g)$ without boundary and a HKVF $\tilde X\in\mathfrak{X}(\tilde M)$ such that 
	\begin{align}
	\label{eq:extend}
		\tilde M\supset M,\quad \tilde g|_M=g, \quad \tilde X|_{M}=X. 
	\end{align} 
	In particular, any connected component of $\partial M$ is an orbit of $\tilde X$. 
\end{lem}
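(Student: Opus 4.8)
The plan is to build the \emph{double} of $M$ across its boundary, but to do so geometrically, using the local structure provided by the HKVF near $\partial M$ rather than an abstract gluing. First I would fix a connected component $C$ of $\partial M$. By the slip boundary condition, $X$ is tangent to $C$, so $X|_C$ is a complete vector field on the $1$-manifold $C$; hence $C$ is either a circle (if $X|_C$ has a periodic orbit or a zero together with the completeness forcing it to be compact — more precisely if $X|_C\not\equiv0$ and has no zero, $C\cong\se^1$ or $C\cong\re$, and if $X|_C$ vanishes somewhere then by the Killing/isometry property the zero set is discrete so in fact $X|_C$ is nowhere zero or $C$ is a single point, which cannot happen for a $1$-manifold boundary component). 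In all cases $C$ is an integral curve of $X$, which already gives the last sentence of the lemma once the extension is constructed. The key point is that, since $X_t$ is an isometry for all $t$ and $X$ is tangent to $C$, the flow $X_t$ preserves $C$ and acts by isometries of $C$; this rigidity is what lets the collar be put in a normal form.

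Next I would produce a \emph{Killing collar}: a neighborhood $U$ of $C$ in $M$ together with a diffeomorphism $\Psi\colon U\to C\times[0,\varepsilon)$ under which $g$ pulls back to a metric of the form $\lambda^2(s)\,(ds^2 + h_C)$ where $s\in[0,\varepsilon)$ is the (unit-speed, with respect to some background parametrization) normal coordinate and $h_C$ is the induced metric on $C$, and under which $X$ corresponds to the $C$-directional field $X|_C$ independent of $s$. To get this, use the normal exponential map of $C$ to define $s$; because $X$ is Killing and tangent to $C$, its flow commutes with the normal exponential map (isometries preserve geodesics and distances), so $X$ has no $\partial_s$-component and its $C$-component is $s$-independent. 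The conformal/isothermal description in the Preliminaries then lets us write $g = \lambda^2(s)(ds^2 + dt^2)$ in a chart where $t$ is the flow parameter of $X$ along $C$. Now reflect: set $\tilde U = C\times(-\varepsilon,\varepsilon)$, extend $\lambda$ to an even function of $s$ (which is automatic up to a choice — one may simply define $\tilde\lambda(s) = \lambda(|s|)$; it is at least continuous, and to keep it $C^r$ one reflects $\lambda$ as an even function, which is $C^r$ because the odd-order $s$-derivatives of $\lambda$ at $s=0$ vanish: the boundary $C$ is totally geodesic for the metric $\lambda^2(ds^2+h_C)$ exactly when $\lambda'(0)=0$, and this holds because $C$ is a \emph{Killing} orbit, hence the second fundamental form of $C$ vanishes — here is where I would insert the short computation that $X$ Killing and tangent to a hypersurface forces that hypersurface's second fundamental form to vanish along the $X$-direction, and in a surface the $X$-direction spans $TC$, so $C$ is totally geodesic). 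Define $\tilde g = \tilde\lambda^2(s)(ds^2+dt^2)$ on $\tilde U$ and $\tilde X = \partial_t$; both restrict correctly to $U$.

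Finally I would glue: let $\tilde M$ be $M$ with each such collar $U$ replaced by $\tilde U$, i.e. $\tilde M = M \cup_{\partial M}(\text{mirror copy of the collars})$, equipped with the metric that is $g$ on $M$ and $\tilde g$ on the doubled collars (these agree on the overlap by construction), and with $\tilde X$ equal to $X$ on $M$ and $\partial_t$ on the doubled collars. Then $\tilde M$ is an open surface without boundary, $\tilde M\supset M$, $\tilde g|_M=g$, $\tilde X|_M = X$, and $\mathcal{L}_{\tilde X}\tilde g = 0$ since this holds on each piece and the pieces overlap on an open set. Completeness of $\tilde X$: every orbit either lies in $M$ (where $X$ is complete) or, after finite time, enters a doubled collar where $\tilde X=\partial_t$ is the complete translation field of $C$, or returns to $M$; an orbit cannot escape to the ``added'' side in finite time because on $\tilde U$ the field is $\partial_t$ whose flow is complete on $C\times(-\varepsilon,\varepsilon)$ and cannot reach $s=\pm\varepsilon$ (the $s$-coordinate is constant along $\tilde X$). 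Since $\tilde X$ is nowhere needed to satisfy a boundary condition ($\tilde M$ has no boundary) and $\tilde X\not\equiv0$, $\tilde X$ is a HKVF on $\tilde M$. Each component $C=\partial_s^{-1}(0)$ is, by construction, the orbit $\{\tilde X_t(p)\}_{t\in\re}$ through any $p\in C$ (it is an integral curve of $\tilde X=\partial_t$ and it is either all of $\se^1$ or all of $\re$ by completeness), proving the last claim.

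The main obstacle is the regularity of the reflected conformal factor $\tilde\lambda$ at $s=0$: one must genuinely verify that all odd-order normal derivatives of $\lambda$ vanish on $C$, and the cleanest route is the observation that $C$ is a \emph{totally geodesic} curve because it is a non-degenerate orbit of a Killing field tangent to it — then in the isothermal coordinate adapted to $C$ the mean curvature $\lambda'(0)/\lambda(0)$ vanishes, giving $\lambda'(0)=0$; iterating the Killing equation (or the harmonicity of $\log\lambda$ away from zeros of curvature, together with an even-reflection/Schwarz-type argument) yields the vanishing of the higher odd derivatives. If one only wants a $C^1$ (or $C^r$ with the metric merely of class $C^{r}$ in the sense of the paper's standing hypothesis $r\ge1$) extension, $\lambda'(0)=0$ alone suffices, so this obstacle is surmountable at the level of regularity assumed in the paper.
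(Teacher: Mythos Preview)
Your overall architecture---build a collar adapted to $X$, put the metric in the conformal form $g=\lambda^2(s)(\md s^2+\md t^2)$ with $X=\partial_t$, then extend across $\{s=0\}$---matches the paper's. The error is in the extension step: you extend by \emph{even reflection} $\tilde\lambda(s)=\lambda(|s|)$ and justify its regularity by claiming that the boundary component $C$ is totally geodesic, equivalently $\lambda'(0)=0$. That claim is false. Orbits of Killing fields are not geodesics in general. Concretely, for a Killing field $X$ tangent to a hypersurface $C$ with unit normal $n$,
\[
II(X,X)=g(\nabla_X X,n)=-\tfrac12\,n\bigl(|X|^2\bigr),
\]
since $g(\nabla_X X,Y)=-g(\nabla_Y X,X)=-\tfrac12 Y(|X|^2)$ from the Killing equation; this has no reason to vanish. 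A clean counterexample within the paper's setting is the closed annulus $\overline\Delta_\rho$ equipped with the round metric pulled back from $\se^2$ (equivalently, a spherical zone bounded by two non-equatorial latitudes): $X=\partial_\theta$ is a HKVF, but a non-equatorial latitude has nonzero geodesic curvature, so $\lambda'(0)\neq 0$ and your reflected $\tilde\lambda$ is only $C^0$ at $s=0$.

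The paper sidesteps this by \emph{not} reflecting. After deriving the isothermal collar chart $\Sigma_p$ with $\Sigma_p^*g=\lambda(y)^2(\md x^2+\md y^2)$ and $X=\partial_x$, it builds $\tilde M$ abstractly by enlarging each chart domain to $(-\varepsilon_p,\varepsilon_p)^2$; the transition maps between overlapping charts are $(x,y)\mapsto(x+c,y)$, so any function of $y$ alone is globally well defined on a boundary component. The metric on the new points is then $\tilde\lambda(y)^2(\md x^2+\md y^2)$ with $\tilde\lambda$ any smooth extension of $\lambda$ to $y<0$, and the vector field is $\tilde X=\partial_x$. The Killing condition
\[
\mathcal{L}_{\partial_x}\bigl[\tilde\lambda(y)^2(\md x^2+\md y^2)\bigr]=0
\]
holds automatically for \emph{every} $\tilde\lambda(y)$, so no constraint like $\lambda'(0)=0$ is needed. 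Your argument is salvaged by exactly this change: drop the reflection, pick a one-sided $C^r$ extension of $\lambda$ (Seeley/Whitney), and proceed; the well-definedness and completeness checks you wrote go through unchanged.
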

\begin{proof}
	We first construct a local coordinate centered at $p\in\partial M$ using $X$ and its horizontal geodesic. 
	Let $v\in T_pM$ be the unit inward normal vector. 
	Define $\Pi: (s,t)\in (-\varepsilon,\varepsilon)\times [0,\varepsilon)\to M$ by 
	\begin{align}
		\Pi(s,t)=X_s\circ\gamma(t)
	\end{align} 
	where $\gamma(t)$ is the geodesic $\gamma(t)=\exp_p(tv)$ and sufficiently small $\varepsilon>0$. 
	
	Let us check $\Pi$ is a local diffeomorphism.  
	We have
	\begin{align}
	\label{eq:dPi}
		\Pi_* (\partial_s)
		&= X\circ X_s\circ\gamma(t)
		=(X_s)_* \circ(X_{-s})_*\circ X\circ X_s\circ\gamma(t)
		=(X_s)_* \circ X\circ\gamma(t),\\
		\Pi_* (\partial_t)
		&= (X_s)_* \gamma'(t).
	\end{align}
	Owing to $X$ is a Killing vector field, it follows  
	\begin{align}
		2\frac{\md}{\md t}g_{\gamma(t)}(X\circ\gamma(t),\gamma'(t))
		&=g(\nabla_{\gamma'(t)}X,\gamma'(t))+g(\gamma'(t),\nabla_{\gamma'(t)}X)\\
		&=(\mathcal{L}_Xg)(\gamma'(t),\gamma'(t))\\
		&=0,
	\end{align}
	which gives
	\begin{align}
	\label{eq:hori}
		g_{\gamma(t)}(X\circ\gamma(t),\gamma'(t))
		=g_{\gamma(0)}(X\circ\gamma(0),\gamma'(0))
		=g_p(X(p),v)
		=0.
	\end{align}
	Combining Eq.~\eqref{eq:dPi} and Eq.~\eqref{eq:hori}, we deduce that $\text{rank } \Pi_*=\dim M=2$ and that $\Pi$ is a local diffeomorphism by the inverse function theorem.
	
	Next, we derive a conformal mapping from $\Pi$. 
	As $X_t$ preserves the metric, 
	since
	\begin{align}
		g(\partial_t,\partial_t)
		=X_s^*g(\gamma'(t),\gamma'(t))
		=g(\gamma'(0),\gamma'(0))
		=1, 
	\end{align}
	by Gauss lemma and 
	\begin{align}
		g(\partial_s,\partial_t)
		&=X_s^*g(\gamma'(t),X\circ \gamma(t))
		=g(\gamma'(t),X\circ \gamma(t))
		=0,\\
		g(\partial_s,\partial_s)
		&= X_s^*g(X\circ \gamma(t),X\circ \gamma(t))
		=|X\circ \gamma(t)|^2,
	\end{align}
	we obtain 
	\begin{align}
		\Pi^*g 
		= |X\circ\gamma(t)|^2 \md s^2+ \md t^2
		= |X\circ\gamma(t)|^2(\md s^2+\md f(t)^2)
	\end{align}
	where $f(t)=\int_0^t \frac{\md u }{|X\circ\gamma(u)|}$. 
	Note that the function $f$ is smooth and strictly monotone increasing owing to $|X\circ\gamma(u)|>0$. 
	Hence, $\Sigma: (x,y)\in (-\varepsilon,\varepsilon)\times [0,\varepsilon)\to M$ defined by 
	\begin{align}
		\Sigma(x,y)=X_x\circ\gamma\circ f^{-1}(y)
	\end{align}
	is a conformal mapping, that is, 
	\begin{align}
		\Sigma^*g=|X\circ\gamma\circ f^{-1}(y)|^2(\md x^2+\md y^2). 
	\end{align}
	In particular, 
	\begin{align}
	\label{eq:flowbox}
		\Sigma_*(\partial_x)=X\circ X_x\circ\gamma\circ f^{-1}(y)=X\circ \Sigma(x,y).
	\end{align}
	
	We next establish the desired extension of the surface and the HKVF. 
	For each $p\in\partial M$, we define 
	$\Sigma_p:(x,y)\in U_p \to M$ defined by 
	\begin{align}
		\Sigma(x,y)=X_x\circ\exp_p(f^{-1}(y) v_p) ,
	\end{align}
	where $U_p=(-\varepsilon_p,\varepsilon_p)^2$ for sufficiently small $\varepsilon_p>0$. 
	Let a manifold $\tilde M$ be $\tilde M=M\cup \bigcup_{p\in\partial M} \Sigma_p^{-1}(U_p)$ with compatible local charts $\mathcal{S}_{M}\cup \bigcup_{p\in\partial M} (U_p,\Sigma_p)$. 
	
	Let $\tilde g$ be  
	\begin{align}
		\tilde g_q
		=
		\begin{cases}
			g_q,&\quad\text{if }q\in M,\\
			|X\circ\exp_p(f^{-1}(y) v_p)|^2(\md x^2+\md y^2), &\quad\text{if }q\in \Sigma_p^{-1}(U_p) \text{ for some }p\in\partial M. 
		\end{cases}
	\end{align}
	Let us check that for each $q\in \tilde M\setminus M$, $\tilde g_q$ is determined independently from the choice of $p\in\partial B$.
	If $p_1,p_2\in \partial M$ satisfy $q\in \Sigma_{p_1}(U_{p_1})\cap \Sigma_{p_2}(U_{p_2})$, $q$ is written by 
	\begin{align}
		q=\Sigma_{p_1}(x_1,y_1)=\Sigma_{p_2}(x_2,y_2).
	\end{align}
	Namely, 
	\begin{align}
		X_{x_1}\circ \exp_{p_1}(f^{-1}(y_1)v_{p_1}) = X_{x_2}\circ \exp_{p_2}(f^{-1}(y_2)v_{p_2}).
	\end{align}
	Then, it follows from the naturality of the Riemannian exponential map that 
	\begin{align}
		\exp_{p_2}(f^{-1}(y_2)v_{p_2}) 
		&= X_{x_1-x_2}\circ \exp_{p_1}(f^{-1}(y_1)v_{p_1})\\
		&= \exp_{X_{x_1-x_2}(p_1)}(f^{-1}(y_1)(X_{x_1-x_2})_*v_{p_1}),
	\end{align}
	which yields 
	\begin{align}
		p_2&=X_{x_1-x_2}(p_1).
	\end{align}
	Since $X_t$ is an isometry, we have $v_2=(X_{x_1-x_2})_*v_{p_1}$ and concequently, $f^{-1}(y_2)=f^{-1}(y_1)$. 
	Since $f$ is strictly monotone increasing, we obtain 
	\begin{align}
	\label{eq:gauge}
	\begin{cases}
		x_2=x_1+c,\\
		y_2=y_1 
	\end{cases}
	\end{align}
	for some constant $c$. 
	Hence, $\tilde g$ is well-defined independent from the choice of $p\in\partial M$ and is smooth Riemannian metric on $\tilde M$ satisfying $\tilde g|_M=g$. 
	
	Let us define $\tilde X$ by 
	\begin{align}
		\tilde X_q
		=
		\begin{cases}
			X_q,&\quad\text{if }q\in M,\\
			\partial_x, &\quad\text{if }q\in \Sigma_p^{-1}(U_p) \text{ for some }p\in\partial M. 
		\end{cases}
	\end{align}
	It follows from Eq.~\eqref{eq:gauge} that $\tilde X$ is well-defined independent from the choice of local charts. 
	Owing to Eq.~\eqref{eq:flowbox}, $\tilde X$ is a smooth vector field on $\tilde M$. 
	Since 
	\begin{align}
		\mathcal{L}_{\tilde X}\tilde g
		= \mathcal{L}_{\partial_x} \left[|X\circ\exp_p(f^{-1}(y)v_p)|^2(\md x^2+\md y^2)\right]
		=0,
	\end{align}
	we see that $\tilde X$ is a Killing vector field. 
	By definition of $\tilde X$, we have $\tilde X\not\equiv0$. 
	Since $\tilde M$ has no boundary, $\tilde X$ has nothing to do with the slip boundary condition. 
	Lastly, let us check $\tilde X$ is complete, i.e., for each $q\in \tilde M\setminus M$ and each $t\in\re$, $\tilde X_t(q)$ is well-defined. 
	Choose $p\in\partial M$ such that $q=\exp_p(f^{-1}(y)v_p)$. 
	By completeness of $X$ on $\partial M$, we obtain 
	\begin{align}
		\exp_{X_t(p)}(f^{-1}(y)(X_t)_*v_p)
		=X_t\circ \exp_p(f^{-1}(y)v_p)
		=X_t(q).
	\end{align}
	Hence, $\tilde X$ is a HKVF on $\tilde M$ with $\tilde X|_M=X$. 
	It immediately follows from the construction that each of connected component of $\partial M$ is an orbit   of $\tilde X$. 
\end{proof}

We are now in a position to show the main theorem. 
\begin{proof}[Proof of Theorem~\ref{thm:main}]
	By Lemma~\ref{lem:extend}, the case where $M$ has a boundary can be reduced to the boundaryless case. 
	Therefore, we first make a classification for the boundaryless case and then classify the bounded case based on that classification. 
	As we see in Lemma~\ref{lem:extend}, the bounded surface is then conformally embedded in a boundaryless surfaces, and the boundary is given as an orbit of an extended HKVF. 
	
	Let us focus on the case when the surface has no boundary. 
	We now have an isometric $\re$-action $X_t\in\mathrm{Isom}(M)$ induced by $X$. 
	Since every isometry is a conformal mapping with the conformal factor $\lambda\equiv1$. 
	Hence, $\{X_t\}_{t\in\re}\subset \mathrm{Aut}(M)$ and consequently, $\mathrm{Aut}(M)$ is not a discrete group. 
	We deduce from Theorem~\ref{fact-except-Riem-surf} that $M$ is conformally equivalent to one of boundaryless Riemann surfaces $R=\hat \ce$, $\ce$, $\Delta$, $\ce^*$, $\Delta^*$, $\Delta_\rho$, $\te_\Gamma$ by a conformal mapping $\phi:M\to R$. 
	The induced flow of $\{(\phi_*X)_t\}_{t\in\re}\subset \mathrm{Aut}(R)$ is represented by a M\"obius transformation. 
	Our goal is to find a conformal mapping $\phi:M\to R$ for each canonical Riemann surface $R$ that satisfies either $(\phi_*X)_t(z)=\me^{\mi t}z$ or $(\phi_*X)_t(z)=z+\mi t$. 
	
	Once $\phi$ is found, we see that $(\phi^{-1})^* g=\lambda^2(|z|)|\md z|^2$ if $(\phi_*X)_t(z)=\me^{\mi t}z$ and that $(\phi^{-1})^*g= \lambda^2(\mathrm{Re}(z))|\md z|^2$ if $(\phi_*X)_t(z)=z+\mi t$. 
	Since $X_t$ is an isometry, we have $X_t^*g =g$, which yields 
	\begin{align}
		(\phi_*X)_t^*\circ( \phi^{-1})^* g
		&=(\phi\circ X_t\circ \phi^{-1})\circ (\phi^{-1})^* g\\
		&=(\phi^{-1})^*\circ X_t^*\circ \phi^* \circ (\phi^{-1})^* g\\
		&=(\phi^{-1})^*\circ X_t^*g\\
		&= (\phi^{-1})^*g. \label{eq-phiXg}
	\end{align}
	As $\phi$ is a conformal mapping, the metric $g$ now satisfies 
	\begin{align}
		(\phi^{-1})^* g=\lambda^2|\md z|^2
	\end{align}  
	for some conformal factor $\lambda=\lambda(z,\bar z)$. 
	Since 
	\begin{align}
		(\phi_*X)_t^*\circ( \phi^{-1})^* g
		&=\lambda^2((\phi_*X)_t(z),(\phi_* X)_t(\bar z)) |\md (\phi_*X)_t(z)|^2\\
		&=
		\begin{cases}
			\lambda^2(\me^{\mi t}z,\me^{-\mi t}\bar z)|\md z|^2,&\quad\text{if }(\phi_*X)_t(z)=\me^{\mi t}z,\\
			\lambda^2(z+\mi t,\bar z-\mi t)|\md z|^2,&\quad\text{if }(\phi_*X)_t(z)=z+\mi t,
		\end{cases}
	\end{align}
	it follows from Eq.~\eqref{eq-phiXg} that for each $t\in\re$, 
	\begin{align}
		\lambda(\me^{\mi t}z,\me^{-\mi t}\bar z)=\lambda(z,\bar z)
	\end{align}
	or 
	\begin{align}
		\lambda(z+\mi t,\bar z-\mi t)=\lambda(z,\bar z). 
	\end{align}
	Choosing $t$ suitably for a given $z$, we deduce $\lambda$ depends only on $|z|$ or on $\mathrm{Re}(z)$. 
	
	Let us start to search for the desired conformal mapping $\phi:M\to R$ for each of boundaryless Riemann surfaces $R=\hat \ce$, $\ce$, $\Delta$, $\ce^*$, $\Delta^*$, $\Delta_\rho$, $\te_\Gamma$. 
	\begin{enumerate}
		\item[$\hat\ce$.] 
		Owing to $\mathrm{Aut}(\hat\ce)\simeq PSL(2,\ce)$ and elementary linear algebra, we deduce that for any $f\in \mathrm{Aut}(\hat\ce)$ with $f(\infty)=\infty$, $f$ conjugates to $az+b$ for some $a,b\in\ce$.  
		Hence, there exists a conformal mapping $\phi:M\to \hat\ce $ and time-dependent functions $a:\re\to \ce$ and $b:\re\to \ce$ such that $(\phi_*X)_t(z)=a(t)z+b(t)$.  
		Since $(\phi_*X)_{t+s}=(\phi_*X)_t\circ(\phi_*X)_s$, we obtain 
		\begin{align}
			a(t+s)z+b(t+s)&=a(t)(a(s)z+b(s))+b(t), 
		\end{align}
		which gives 
		\begin{align}
			a(t+s)&=a(t)a(s),\\
			b(t+s)&=a(t)b(s)+b(t).
		\end{align}
		Owing to $(\phi_*X)_0(z)=z$, we now have $a(0)=1$ and $b(0)=0$. 
		Then, from
		\begin{align}
			\frac{a(t+s)-a(t)}{s}&=\frac{a(s)-a(0)}{s}a(t),\\
			\frac{b(t+s)-b(t)}{s}&=\frac{b(s)-b(0)}{s}a(t)
		\end{align}
		and taking these limits as $s\to0$, we derive the following initial value problem for $a(t)$ and $b(t)$: 
		\begin{align}
				\dot a(t)&=\dot a(0) a(t),\quad a(0)=1,\\
				\dot b(t)&=\dot b(0) a(t),\quad b(0)=0.  
		\end{align}
		Solving this, we obtain 
		\begin{align}
			a(t)&=\exp(\dot a(0)t),\\
			b(t)&=
			\begin{cases}
				\dot b(0)t+b(0),\quad &\text{if }\dot a(0)=0,\\
				(\dot b(0)/\dot a(0))\exp(\dot a(0)t),\quad &\text{otherwise}.
			\end{cases}
		\end{align}
		It is necessary for meeting $b(0)=0$ that 
		\begin{align}
			b(t)&=
			\begin{cases}
				\dot b(0)t,\quad &\text{if }\dot a(0)=0,\\
				0,\quad &\text{otherwise}.
			\end{cases}
		\end{align}
		As a result, there exists a conformal mapping $\phi:M\to \hat\ce $ such that $(\phi_*X)_t(z)=\me^{u t}z$ or $z+v t$ for some $u,v\in\ce\setminus\{0\}$. 
		Replacing $t$ by $t/|u|$ or $t/|v|$,without loss of generality, we can assume that $|u|=|v|=1$ and that $\mathrm{Im}(u)$ and $\mathrm{Im}(v)$ are non-negative. 
		
		Let us show that it can be reduced to $(\phi_*X)_t(z)=\me^{\mi t}z$ or $z+\mi t$. 
		Concerning $(\phi_*X)_t(z)=\me^{u t}z$, we can obtain $\mathrm{Re}(u)=0$. 
		If $\mathrm{Re}(u)<0$ were true,   
		$0\in\hat\ce$ would be an attracter of $\phi_*X$. 
		In particular, for each ball $B_r(0)$ centered at $0$ with radius $r$, we have 
		\begin{align}
			\mathrm{Area}((\phi_*X)_t(B_r(0)))
			=\mathrm{Area}(B_{r\me^{\mathrm{Re}\lambda t}}(0))
			\to 0\quad\text{as }t\to \infty,
		\end{align}
		which contradicts $X_t$ is an isometry. 
		In a similar way, we see that $\mathrm{Re}(u) <0$ is not true. 
		Hence, $\mathrm{Re} (u)= 0$. 
		Then, it can be written as $(\phi_*X)_t(z)=\me^{\mi t}z$.  
		
		Concerning $(\phi_*X)_t(z)=z+vt$, 
		we can deduce $(\phi_*X)_t(z)=z+\mi t$ since a rotation $R(z)=\mi z/v$ is a conformal mapping on $\hat\ce$, that is, 
		\begin{align}
			R\circ (\phi_*X)_t\circ R^{-1}=\frac{\mi}{v}\left ( \frac{v}{\mi } z+vt\right)=z+\mi t. 
		\end{align}
		As a result, we deduce either $(\phi_*X)_t(z)=\me^{\mi t}z$ or $(\phi_*X)_t(z)=z+\mi t$. 
		
		If $\mathrm{Per}(X)\ne \emptyset$, $(\phi_*X)_t(z)=\me^{\mi t}z$ holds true.
		Otherwise, we must have $(\phi_*X)_t(z)=z+\mi t$. 
		However, since $\infty\in\hat \ce$ becomes an attracter of $\phi_*X$, every $r$-ball contracts to a point $\infty$, which contradicts $X_t$ is an isometry from the same argument about $\mathrm{Area}((\phi_*X)_t(B_r(0)))$ in the above.
		Hence, we see that there is no case where $\mathrm{Per}(X)= \emptyset$ and that when $M\simeq \hat\ce$, there is only the case for $(\phi_*X)_t(z)=\me^{\mi t}z$. 
		\item[$\ce$.] 
		In the same way as for $\hat\ce$, we can obtain a conformal mapping $\phi:M\to \ce$ that satisfies either $(\phi_*X)_t(z)=\me^{\mi t}z$ or $(\phi_*X)_t(z)=z+\mi t$. 
		If $\mathrm{Per}(X)\ne\emptyset$, we have $(\phi_*X)_t(z)=\me^{\mi t}z$. 
		Otherwise, we obtain $(\phi_*X)_t(z)=z+\mi t$. 
		\item[$\Delta$.] 
		Since $\mathrm{Aut}(\Delta)\simeq PSL(2,\re)$, every non-identical $f\in\mathrm{Aut}(\Delta)$ either has one fixed point in $\Delta$ and the other out of $\Delta$ (elliptic), one fixed point in $\{|z|=1\}$ and nothing else (parabolic) or two fixed points in $\{|z|=1\}$ (hyperbolic). 
		There is no loss of generality in assuming that the set of all fixed points of $f$ is given by either $\{0\}$, $\{-1\}$ or $\{1,-1\}$, respectively. 
		
		For the elliptic case, since $f$ is represented as 
		\begin{align}
			f(z)=\me^{\mi \theta}\frac{z-\alpha}{1-\bar\alpha z},
		\end{align}
		for some $\theta\in \re$ and some $\alpha\in\Delta$, it follows from $f(0)=\alpha$ that $f(z)=\me^{\mi \theta}z$. 
		
		We thus obtain a conformal mapping $\phi:M\to \Delta$ with $(\phi_*X)_t(z)=\me^{\mi \theta(t)}z$ for some time-dependent function $\theta:\re\to \re$. 
		It follows from $(\phi_*X)_{t+s}=(\phi_*X)_{t}\circ(\phi_*X)_{s}$ that 
		\begin{align}
		\label{eq-theta}
			\theta(t+s)=\theta(t)+\theta(s),\quad \theta(0)=0,
		\end{align}
		which gives 
		\begin{align}
			\dot\theta(t)
			&=\lim_{s\to0}\frac{\theta(t+s)-\theta(t)}{s}\\
			&=\lim_{s\to0}\frac{\theta(s)-\theta(0)}{s}\\
			&=\dot\theta(0). 
		\end{align}
		Hence, $\theta(t)=\dot\theta(0) t$. 
		Choosing $t\in\re$ to be $\dot\theta(0)=1$, we deduce $(\phi_*X)_t(z)=\me^{\mi t}z$.
	
		For the parabolic case, we show any $f\in\mathrm{Aut}(\Delta)$ with a multiple fixed point at $z=-1$ conformally conjugates a translation on $(0,\infty)\times\re$ by using the conformal mapping $F_p:\Delta\to (0,\infty)\times\re$, which is defined in Example~\ref{ex-surf}. 
		Let us utilize the following presentation of $f$: 
		\begin{align}
			f(z)=\frac{az+b}{\bar b z+\bar a}
		\end{align}
		for some $a,b\in\ce$ with $|a|^2-|b|^2=1$. 
		In this presentation, the fixed point $z=f(z)$ solves 
		\begin{align}
		\label{eq-pfix}
			\bar b z^2+(\bar a-a)z-b=0.
		\end{align}
		Since the fixed point of $f$ is now only at $z=-1$, Eq.~\eqref{eq-pfix} has a multiple root at $z=-1$. 
		Hence, we deduce
		\begin{align}
			\bar a-a=2\bar b,\quad b+\bar b=0,
		\end{align}
		which gives
		\begin{align}
			a=\pm 1+\mi \theta ,\quad b=\mi \theta
		\end{align}
		for some $\theta\in\re$. 
		Namely, every $f\in\mathrm{Aut}(\Delta)$ with a multiple fixed point at $z=-1$ is presented by
		\begin{align}
			f(z)=f_+(z)\coloneqq\frac{(1+\mi\theta )z+\mi \theta}{-\mi \theta z+1-\mi \theta}
			\quad\text{or}\quad 
			f(z)=f_-(z)\coloneqq\frac{(-1+\mi \theta)z+\mi \theta}{-\mi \theta z-1-\mi \theta}.
		\end{align}
		Then, 
		\begin{align}
			F_p\circ f_+\circ F_p^{-1}=z-2\mi \theta ,\quad 
			F_p\circ f_-\circ F_p^{-1}=z+2\mi \theta.
		\end{align}
		
		Hence, in any case, there exists a conformal mapping $\phi:M\to (0,\infty)\times \re$ such that $(\phi_*X)_t(z)=z+\mi\theta(t)$ with a time-dependent function $\theta:\re\to\re$. 
		Since $(\phi_*X)_{t+s}=(\phi_*X)_t\circ(\phi_*X)_s$, we see that $\theta(t)$ satisfies the same equalities as Eq.~\eqref{eq-theta}. 
		By a similar argument, $\theta(t)=t$ and, in consequence, $(\phi_*X)_t(z)=z+\mi t$.
		
		For the hyperbolic case, we see that for any $f\in\mathrm{Aut}(\Delta)$, if $f$ has fixed points at $z=\pm 1$, $f$ conformally conjugates a translation on $(0,2\pi)\times\re$. 
		In the same manner as the second case, we now have the representation 
		\begin{align}
			f(z)=\frac{az+b}{\bar b z+\bar a}
		\end{align}
		with $a,b\in\ce$ and $|a|^2-|b|^2=1$. 
		Since $f(1)=1$ and $f(-1)=-1$, we obtain $\bar a=a$ and $\bar b=b$. 
		Set 
		\begin{align}
			c=\frac{b}{a}\in(0,1),\quad \theta= -2\log\left(\frac{1-c}{1+c}\right)\in\re.  
		\end{align}
		Then, $f$ is rewritten as 
		\begin{align}
			f(z)=\frac{z+c}{c z+1}. 
		\end{align} 
		Using the conformal mapping $F_h:\Delta\to (0,1)\times \re$, which is defined in Example~\ref{ex-surf}, we have
		\begin{align}
			F_h\circ f\circ F_h^{-1}(z)=z-2\mi \log\left(\frac{1-c}{1+c}\right)= z+\mi \theta.
		\end{align}
		Similarly, we can take a conformal mapping $\phi:M\to (0,2\pi)\times\re$ that satisfies $(\phi_*X)_t(z)=z+\mi t$. 
		
		As a result, if $\mathrm{Per}(X)\ne \emptyset$, $(\phi_*X)_t(z)=\me^{\mi t}z$ on $\Delta$.
		Otherwise, $(\phi_*X)_t(z)=z+\mi t $ on $(0,\infty)\times\re$ or on $(0,1)\times\re$. 	
		\item[$\ce^*$.] 
		Every $f\in\mathrm{Aut}(\ce^*)$ can be extended $\tilde f\in\mathrm{Aut}(\hat\ce)$ that fixes $0$ and $\infty$. 
		In the case for $\hat\ce$, we have seen that there exists a conformal mapping $\phi:M\to \ce^*$ that satisfies either $(\phi_*X)_t(z)=\me^{u t} z$ or $(\phi_*X)_t(z)=z+vt$ with $u,v\in\{z\in\ce|\,|z|=1,\, \mathrm{Im}(z)\geq 0\}$.
		Since $(\phi_*X)_t$ fixes $0$, there is no conformal mapping $\phi:M\to \ce^*$ with $(\phi_*X)_t(z)=z+v t$. 
		Hence, we deduce only the case where $(\phi_*X)_t(z)=\me^{u t} z$.
		
		If $\mathrm{Per}(X)\ne\emptyset$, we have $(\phi_*X)_T(z)=z$ for some $T\in(0,\infty)$, that is, $\me^{u T}=1$ and $\mathrm{Re}(u)=0$. 
		As we see for $\hat \ce$, we can choose $\phi:M\to \ce^*$ such that $(\phi_*X)_t(z)=\me^{\mi t} z$.
		
		If $\mathrm{Per}(X)=\emptyset$, let $F_c: \ce^*\to \se^1\times\re$ be 
		\begin{align}
			F_c(z)= \frac{\mi }{v}\log z.
		\end{align}
		Obviously, $F_c$ is a conformal mapping and $F_c^{-1}(z)=\me^{-\mi v z}$. 
		Since 
		\begin{align}
			F_c\circ (\phi_*X)_t\circ F_c^{-1}(z)= z+\mi t,
		\end{align}
		we obtain the existence of a conformal mapping $\phi:M\to \se^1\times\re$ such that $(\phi_*X)_t=z+\mi t$. 
		\item[$\Delta^*$.]
		In a similar way for $\ce^*$, any $f\in \mathrm{Aut}(\Delta^*)$ has an extension $\tilde f\in\mathrm{Aut}(\Delta)$ that fixes $0\in\Delta$. 
		As we see in the case for $R=\Delta$, when $\tilde f$ has a fixed point at $z=0$, $\tilde f(z)=\me^{\mi \theta}z$. 
		Hence, we deduce $(\phi_*X)_t(z)=\me^{\mi t}z$ on $\Delta^*$.    
		\item[$\Delta_\rho$.]
		Since every $f\in\mathrm{Aut}(\Delta_\rho)$ preserves $\{|z|=1\}$, $f$ is represented as 
		\begin{align}
			f(z)=\me^{\mi \theta}\frac{z-\alpha}{1-\bar\alpha z},
		\end{align}
		for some $\theta\in \re$ and some $\alpha\in\Delta$. 
		We shall derive $\alpha=0$ from the invariance of $\{|z|=\rho\}$ for $f$. 
		Namely, for each $\varphi\in\re$, we now have $|f(\rho\me^{\mi\varphi})|=\rho$. 
		In particular, choosing $\varphi$ that satisfies $\alpha\me^{-\mi \varphi}\in\re$, we obtain 
		\begin{align}
			\left|\frac{\rho-|\alpha|}{1-\rho|\alpha|}\right|=\rho. 
		\end{align}
		Solving this, we deduce $|\alpha|=0$ or $2\rho/(1+\rho^2)$. 
		Owing to $\rho<1$, we must have $\alpha=0$, and consequently, $f=\me^{\mi \theta}z$. 
		Hence, as we see for $\Delta$, we deduce $(\phi_*X)_t(z)=\me^{\mi t}z$ on $\Delta_\rho$. 
		\item[$\te_\Gamma$.]
		Taking the projection $p:\ce\to \te_\Gamma$, for each $f\in\mathrm{Aut}(\te_\Gamma)$, we can take $\tilde f\in\mathrm{Aut}(\ce)$, called a lift, such that 
		\begin{align}
			\label{eq-commute}
			p\circ \tilde f=f\circ p
		\end{align}
		As we see for $\hat\ce$, we now have a conformal mapping $\phi:M\to \te_\Gamma$ that satisfies either $(\phi_*X)_t(z)=\me^{u t} z$ or $(\phi_*X)_t(z)=z+vt$ with $u,v\in\{z\in\ce|\,|z|=1,\, \mathrm{Im}(z)\geq 0\}$.
		
		If $(\phi_*X)_t(z)=\me^{u t} z$ were true, $\phi_*X$ would have a fixed point at $0$. 
		Then, for any $n,m\in\ze$, $m\pi_1+n\pi_2$ also becomes a fixed point, which is impossible. 
		Hence, there is no case where $(\phi_*X)_t(z)=\me^{u t} z$. 
		By using the the rotation $z\mapsto \mi z/v$, we deduce $(\phi_*X)_t(z)=z+\mi t$ on $\te_{\Gamma'}$ with some lattice $\Gamma'$.  
	\end{enumerate}
	Let us summary for the case for surfaces without boundary.
	If $\mathrm{Per}(X)\ne \emptyset$, $M\simeq \hat \ce$, $\ce$, $\Delta$, $\ce^*$, $\Delta^*$, $\Delta_\rho$, $\te_\Gamma$. 
	Then, except for $M\simeq\te_\Gamma$, $(\phi_*X)_t(z)=\me^{\mi t}z$. 
	When $M\simeq\te_\Gamma$, $(\phi_*X)_t(z)=z+\mi t$. 
	If $\mathrm{Per}(X)= \emptyset$, $M\simeq\ce$, $(0,\infty)\times \re$, $(0,2\pi)\times \re$, $\se^1\times \re$, $\te_\Gamma$ and $(\phi_*X)_t(z)= z+\mi t$. 

	We next turn to the case for surfaces with boundary. 
	Lemma~\ref{lem:extend} confirms the existence of an extended open surface $(\tilde M,\tilde g)$ and an extended HKVF $\tilde X\in\mathfrak{X}(\tilde M)$ for a given surface $(M,g)$ with boundary and a given HKVF $X\in\mathfrak{X}(M)$. 
	Since each connected component of $\partial M$ is an orbit of $\tilde X$, 
	the surface is obtained by cutting off a part near ends of open surfaces along a HKVF. 
	
	Let us enumerate all of the possibilities for each of open Riemann surfaces $R=\ce$, $\Delta$, $\ce^*$, $\Delta^*$, $\Delta_\rho$. 
	We now have a conformal mapping $\phi:\tilde M\to R$ such that $(\phi_*\tilde X)_t(z)=\me^{\mi t}z$ or $z+\mi t$. 
	\begin{enumerate}
		\item[$\ce$.] 
		When $(\phi_*\tilde X)_t(z)=\me^{\mi t}z$, every orbit of $\phi_*\tilde X$ is a circle $\{|z|=r\}$ with radius $r>0$, which gives $\phi(\partial M)$. 
		By using the conformal mapping $z\mapsto z/r$, we deduce  $M\simeq\overline \Delta$. 
		
		When $(\phi_*\tilde X)_t(z)=z+\mi t$, every orbit of $\phi_*\tilde X$ is a line $\{\mathrm{Re}(z)=x_1\}$ with $x_1\in\re$. 
		Taking the conformal mapping $z\mapsto z-x_1$, there is no loss of generality by assuming $\phi(M)=[0,\infty)\times\re$. 
		Moreover, cutting off $[0,\infty)\times\re$ along another line $\{\mathrm{Re}(z)=x_2\}$ with $x_2\in[0,\infty)$, we obtain $\phi(M)=[0,x_2]\times \re$. 
		By using $z\mapsto 2\pi z/x_2$, that is reduced to $\phi(M)=[0,2\pi ]\times \re$. 
		As a result, $M\simeq\overline\Delta$, $[0,\infty)\times \re$ or $[0,2\pi ]\times\re$. 
		\item[$\Delta$.] 
		We have seen that if $\mathrm{Per}(\tilde X)\ne \emptyset$, 
		$(\phi_*\tilde X)_t(z)=\me^{\mi t}z$ on $\Delta$ else if $(\phi_* \tilde X)_t(z)=z+\mi t $ on $(0,\infty)\times\re$ or on $(0,2\pi)\times\re$. 
		In the same argument for $\ce$, we deduce that $M\simeq  \overline\Delta$, $[0,\infty)\times \re$, $[0,2\pi)\times\re$ or $[0,2\pi]\times\re$. 
		\item[$\ce^*$.] 
		We then have $(\phi_* \tilde X)_t(z)=\me^{\mi t}z$ on $\ce^*$ or $(\phi_* \tilde X)_t(z)=z+\mi t$ on $\se^1\times \re$. 
		In the former case, cutting off $\ce^*$ along the unit circle, we obtain $\phi(M)=\overline\Delta^*$. 
		In addition, if $\{|z|<\rho\}\subset \overline\Delta^* $ is cut off, $\phi(M)=\overline\Delta_\rho$. 
		
		For the latter case, there is no surface with boundary embedded in this surface,  
		because the boundary curve must be $\{\mathrm{Im}(z)=L\}$ for some $L\in\re$ 
		but then $X=\tilde X|_{M}$ does not satisfy the slip boundary condition.
		Consequently, $M\simeq \overline\Delta^*$, $\overline\Delta_\rho$. 
		\item[$\Delta^*$.] 
		There is only the case $(\phi_* \tilde X)_t(z)=\me^{\mi t}z$. 
		Hence, cutting off $\{|z|>r \}\subset \Delta^* $ with $r\in(0,1)$ and expanding it by $z\mapsto z/r$, we obtain $M\simeq \overline\Delta^*$. 
		\item[$\Delta_\rho$.] 
		Similarly, we only have $(\phi_* \tilde X)_t(z)=\me^{\mi t}z$. 
		Cutting off $\Delta_\rho$ along a circle $\{|z|=1-\varepsilon\}$ and expanding it, we obtain $M\simeq \tilde\Delta_\rho$. 
		In addition, cutting off $\{\rho<|z|<r\}\subset \tilde\Delta_\rho$, and replacing $r$ with $\rho$, we obtain $M\simeq \overline \Delta_\rho$.
		Thus, $M\simeq \tilde \Delta_\rho$ or $\overline \Delta_\rho$. 
	\end{enumerate}
	Summarizing the case when $M$ has boundary, if $\mathrm{Per}(X)\ne\emptyset$, we obtain $(\phi_*X)_t(z)=\me^{\mi t}z$ and $M\simeq \overline \Delta$, $\overline \Delta^*$, $\overline \Delta_\rho$ or $\tilde \Delta_\rho$. 
	Otherwise, it follows $(\phi_*X)_t(z)=z+\mi t$ and $M\simeq [0,\infty)\times\re$, $[0,2\pi)\times \re$ or $[0,2\pi]\times \re$. 
\end{proof}

Lastly, we proceed to prove Corollary~\ref{cor-E}. 
\begin{proof}[Proof of Corollary~\ref{cor-E}]	
	Let us take $\phi:M\to R$ given in Theorem~\ref{thm:main}. 
	When $\mathrm{Per}(X)=\emptyset$ or $M\simeq \te_\Gamma$, 
	the claim has been confirmed in Theorem~\ref{thm:main} 
	and then $\mathrm{Fix}(X)=\emptyset$ and $E=B\times \re$ for some $1$-manifold $B$ or $E=\te_\Gamma$, respectively. 
	
	Concerning the case when $\mathrm{Per}(X)\ne \emptyset$ except for $M\simeq \te_\Gamma$,
	$\log :z=r\me^{\mi \theta}\in R\setminus\{0\}\to \log z= \log r+\mi \theta \in B\times\se^1$ is now a conformal mapping with a certain $1$-manifold $B$. 
	Hence, as long as $\mathrm{Fix}(X)=\emptyset$, $\log\circ\phi: M\to B\times\se^1$ becomes the desired conformal mapping. 
	Because, writing $\log z=x^1+\mi x^2$, we have 
	\begin{align}
		g&=\lambda^2(|z|)|\md z|^2\\
		&= \lambda^2(\me^{x^1})|\me^{x^1+\mi x^2}|^2|\md x^1+\mi \md x^2|^2\\
		&=\left(\lambda(\me^{x^1})\me^{x^1}\right)^2((\md x^1)^2+(\md x^2)^2).
	\end{align}
	Hence, rewriting $\lambda(\me^{x^1})\me^{x^1}$ as $\lambda(x^1)$, we obtain $g=\lambda^2(x^1)( (\md x^1)^2+(\md x^2)^2)$. 
	Furthermore, since
	\begin{align}
		\left((\log\circ\phi)_*X\right)_t(w)
		&=\log\circ( \phi_*X)_t\circ\log^{-1}(w)\\
		&=\log\circ(\phi_*X)_t(\me^{w})\\
		&=\log( \me^{\mi t}\me^{w})\\
		&=w+\mi t, 
	\end{align}
	we deduce $(\log\circ\phi)_*X=\partial_2$. 
	When $\mathrm{Fix}(X)\ne \emptyset$, Theorem~\ref{thm:main} yields that $R=\hat \ce$, $\ce$, $\Delta$, or $\overline\Delta$. 
	In any above case, $\mathrm{Fix}(X)$ is mapped to $0$ or $\infty$ in $R$ by $\phi:M\to R$.
	Hence, in the same argument as the case for $\mathrm{Fix}(X)=\emptyset$, we see that $\log\circ\phi|_{M\setminus\mathrm{Fix}(X)}:M\setminus\mathrm{Fix}(X)\to B\times\se^1$ is the desired conformal mapping. 
	Then, $\lambda =|X|\circ\phi^{-1}$  follows from 
	\begin{align}
		\lambda^2
		&=(\phi^{-1})^*g(\partial_2 ,\partial_2 )\\
		&=g_{\phi^{-1}}(\phi^{-1}_*\partial_2,\phi^{-1}_*\partial_2)\\
		&=g_{\phi^{-1}}(X,X)\\
		&=|X|^2\circ\phi^{-1}. 
	\end{align} 
\end{proof}

%\section{Application}
%1. Navier-Stokes-Taylor, what surface admitting HKVF and Harmonic field, 
%2. Minimal surface admitting HKVF

%%%%%%%%%%%%%%%%
%%%%%%%%%%%%%%%%%%%%%%%%%%%%%%
\vspace{0.5cm}
\noindent
{\bf Acknowledgments.}\ 
Research of YS was partially supported by Grant-in-Aid for JSPS Fellows 21J00025, Japan Society
for the Promotion of Science (JSPS).
% Japan Society for the Promotion of Science (JSPS).
%%%%%%%%%%%%%%%%
%%%%%%%%%%%%%%%%%%%%%%%%%%%%%%%%%%
\bibliographystyle{amsplain} 

\end{document}